\documentclass[12pt,a4paper, oneside]{amsart}

\usepackage{amsmath, nicefrac, amsthm, verbatim, amsfonts, mathtools, amssymb, upgreek, xcolor, bbm}
\usepackage{graphics, xspace, enumerate}
\usepackage{stix}
\usepackage{dsfont}
\usepackage[a4paper,margin=2.5cm]{geometry}
\usepackage[bb=boondox]{mathalfa}

\usepackage{graphicx}
\usepackage[colorlinks=true,citecolor=red,urlcolor=blue,linkcolor=red,bookmarksopen=true,unicode=true,pdffitwindow=true]{hyperref}
\usepackage[english]{babel}
\usepackage[languagenames,fixlanguage]{babelbib}
\hypersetup{pdfauthor={}}
\hypersetup{pdftitle={}}

\usepackage{seqsplit,cleveref}

\hyphenation{Austau-schdienst}

\theoremstyle{plain}
\newtheorem{theorem}{Theorem}[section]

\newtheorem{lemma}[theorem]{Lemma}

\theoremstyle{definition}

\newcommand {\Prob} {\ensuremath{\mathbb{P}}}
\newcommand {\R} {\ensuremath{\mathbb{R}}}

\newcommand {\N} {\ensuremath{\mathbb{N}}}

\newcommand{\chain}[1]{\{#1_n\}_{n\geq0}}

\newcommand{\ttup}[1]{\textup{(}#1\textup{)}}
\newcommand{\df}{\coloneqq}

\newcommand{\E}{\mathrm{e}}

\newcommand{\sfZ}{\mathsf{Z}}

\newcommand{\D}{\mathrm{d}}

\numberwithin{equation}{section}


\title[Learning from non-irreducible Markov chains]{Learning from non-irreducible Markov chains}

\author[N.\ Sandri\'{c}]{Nikola Sandri\'{c}}
\address[Nikola\ Sandri\'{c}]{Department of Mathematics\\University of Zagreb\\ Zagreb\\Croatia}
\email{nikola.sandric@math.hr}

\author[S.\ \v Sebek]{Stjepan\ \v Sebek}
\address[Stjepan\  \v Sebek]{
	Department of Applied Mathematics\\
	Faculty of Electrical Engineering and Computing\\
	University of Zagreb\\ 
	Zagreb\\ 
	Croatia}
\email{stjepan.sebek@fer.hr}

\subjclass[2010]{68W40, 68T10, 60J05}
\keywords{Approximate sample error minimization  algorithm, Generalization bounds, Markov chain, Wasserstein distance}

\begin{document}
\allowdisplaybreaks[4]

\begin{abstract}
	Most of the existing literature on	 supervised machine learning problems focuses on the case when the training data set is drawn from an i.i.d.\ sample. However, many practical  problems   are characterized by temporal  dependence and strong correlation between the marginals of the data-generating process, suggesting that the i.i.d. assumption is not always justified. This problem has been already considered   in the context of Markov chains satisfying the Doeblin condition. This condition, among other things, implies that the chain is not singular in its behavior, i.e.\ it is irreducible. In this article, we  focus on the case when the training data set is drawn from  a not necessarily  irreducible Markov chain. Under the assumption that the   chain is uniformly ergodic with respect to the $\mathrm{L}^1$-Wasserstein distance, and
certain regularity assumptions on the  hypothesis class and the state space of the chain,  
 we first obtain a uniform convergence result for the corresponding sample error, and then
we conclude learnability of the 
approximate sample error minimization  algorithm and find its  generalization bounds. At the end,  a relative uniform convergence result for the  sample error is also discussed.
\end{abstract}

\maketitle

\section{Introduction}\label{S1}

Let  $(\mathsf{X},\mathcal{X})$ and $(\mathsf{Y},\mathcal{Y})$ be measurable spaces, and let $(x_1,y_1),\dots,(x_n,y_n)\in\mathsf{X}\times\mathsf{Y}$ (the so-called training  data set) be drawn from random elements $(X_1,Y_1),\dots,(X_n,Y_n):\Omega\to\mathsf{X}\times\mathsf{Y}$ defined on a probability space $(\Omega,\mathcal{F},\Prob)$. In general,  the probability measure $\Prob$ (or the distribution of the data-generating process $(X_1,Y_1),\dots,(X_n,Y_n)$) is not necessarily known. 
The main task of supervised machine learning is, given a family of measurable functions $\mathscr{H}\subseteq\mathsf{Y}^{\mathsf{X}}$ (the so-called hypothesis class), to construct a learning algorithm $\mathcal{A}:\bigcup_{n=1}^\infty(\mathsf{X}\times\mathsf{Y})^n\to\mathscr{H}$ that accurately  predicts a functional relation  between the input (first coordinate) and output (second coordinate) observable. We remark that even if there is an exact functional relationship, this function is in general not known to us and it does not have to belong to the  class  $\mathscr{H}$. More precisely, given  a  measurable function $\ell:\mathcal{Y}\times\mathcal{Y}\to[0,\infty)$ (the so-called loss function) and $\varepsilon,\delta\in(0,1)$, the aim is to construct an algorithm  $\mathcal{A}$ and find $n(\varepsilon,\delta)\in\N$, such that for any $\Prob$ on $(\Omega,\mathcal{F})$,
$$\Prob\bigl(|\mathrm{er}_\Prob(\mathcal{A}((X_1,Y_1),\dots,(X_n,Y_n))-\inf_{h\in\mathscr{H}}\mathrm{er}_\Prob(h)|<\varepsilon\bigr)\ge 1-\delta\qquad \forall n\ge n(\varepsilon,\delta),$$ where $\mathrm{er}_\Prob(h)\df\mathbb{E}_\Prob[\ell(h(X),Y)]$ is the expected loss of the hypothesis $h\in\mathscr{H}$ with respect to the distribution of data $\Prob$. The smallest $n(\varepsilon,\delta)$ (for given $\varepsilon,\delta\in(0,1)$) for which the above relation holds is called the sample complexity of $\mathcal{A}$. Classical assumption in this problem is  the 
 i.i.d.\ nature of the elements $(X_1,Y_1),\dots,(X_n,Y_n)$, see e.g.\  \cite{Anthony-Bartlett-Book-1999} and \cite{Shalev-Shwartz-Ben-David-Book-2014}. 
 However, many practical supervised machine learning problems  are characterized by temporal  dependence and strong correlation between the marginals of the data-generating process, suggesting that the i.i.d.\ assumption is not always justified.

 In this article, we assume that the training data set is drawn from a temporally-homogeneous Markov chain. More precisely,  let $(\mathsf{Z},\mathcal{Z})$, $\mathsf{Z}\subseteq\mathsf{X}\times\mathsf{Y}$, be  a measurable space, and let $\mathrm{pr}_\mathsf{X}:\mathsf{Z}\to\mathsf{X}$ and $\mathrm{pr}_\mathsf{Y}:\mathsf{Z}\to\mathsf{Y}$ 
be the projection mappings  such that $\mathrm{pr}^{-1}_\mathsf{X}(\mathrm{pr}_\mathsf{X}(\mathsf{Z})\cap\mathcal{X})\subseteq\mathcal{Z}$ and $\mathrm{pr}^{-1}_\mathsf{Y}(\mathrm{pr}_\mathsf{Y}(\mathsf{Z})\cap\mathcal{Y})\subseteq\mathcal{Z}$.
This, in particular, implies that  $\mathrm{pr}_\mathsf{X}$ and $\mathrm{pr}_\mathsf{Y}$ are measurable.  Further, let $(\Omega,\mathcal{F},\Prob_{z},\{Z_n\}_{n\ge0},\chain{\mathcal{F}})_{z\in\mathsf{Z}}$, denoted by $\{Z_n\}_{n\ge0}$ in the sequel,  be a temporally-homogeneous Markov chain on $\mathsf{Z}$, in the sense of \cite{Meyn-Tweedie-Book-2009}. 
The process $\{Z_n\}_{n\ge0}=$ \linebreak $ \{(\mathrm{pr}_\mathsf{X}(Z_n),\mathrm{pr}_\mathsf{Y}(Z_n))\}_{n\ge0}$ is viewed as a sequence of input data (the first component) together with their outputs (the second component). As commented above, the main aim  is to learn a function (from a given hypothesis class $\mathscr{H}$) that, given a training data set  $z_0,\dots,z_{n-1}\in\mathsf{Z}$ drawn from the first $n$ samples of $\{Z_n\}_{n\ge0}$, best approximates the relationship  between the input and output  observable. 
This type of problem has already been considered in the literature, see the literature review part below. However, in all these works a crucial assumption is that the process $\{Z_n\}_{n\ge0}$ is not singular in its behavior, i.e.\ it is irreducible: there is a $\sigma$-finite measure $\upvarphi(\D z)$ on
$\mathcal{Z}$ such that whenever $\upvarphi(B)>0$ we have
$\sum_{n=0}^\infty \Prob_z(Z_n\in B)>0$ for all $z\in\sfZ$. A typical example of this type is given as follows. Let $\{X_n\}_{n\ge0}$ be an irreducible Markov chain on $\mathsf{X}$, and let $f:\mathsf{X}\to\mathsf{Y}$ be measurable. According to \Cref{LM}, the process $Z_n=(X_n,f(X_n))$, $n\ge0$, is an irreducible Markov chain on $\mathsf{Z}=\{(x,f(x)):x\in\mathsf{X}\}$. The first component can be understood as a state of the system (vector of cepstral coefficients in speech recognition problems, vector of the position and velocity of the center of gravity of a moving object in object tracking problems, category of a unit-time price change in market prediction problems). The second component represents the label of the corresponding state (e.g., emotional state of the speaker, temporal distance of the tracking object from a referent point, trading activity: buy/sell/wait). Irreducibility here means that the system can move from one state to any other, in a finite number of time steps. Typically, the labeling function $f(x)$ is unknown. Hence, the goal is, basing upon a given training data set and hypothesis class $\mathscr{H}$ (which does not necessarily contain $f(x)$), to find (by constructing an appropriate learning algorithm) a hypothesis $h\in\mathscr{H}$ which best approximates the true labeling function $f(x)$.

In this article, we study the learning problem in the case when $\{Z_n\}_{n\ge0}$ is not necessarily irreducible.
We follow the standard  statistical learning theory scheme: 
 we first obtain a uniform convergence result for the corresponding sample error, and then we conclude learnability of the 
  approximate sample error minimization  algorithm and find its  generalization bounds.

\subsection*{Main results} We start with the description of  the model. Assume the following: 

\smallskip

\begin{description}
	\item [(A1)]  $\mathsf{Z}$ is  a complete separable metric space with bounded metric $\uprho$, and $\mathcal{Z}$ is the corresponding Borel $\sigma$-algebra $\mathfrak{B}(\sfZ)$.
	
	\smallskip
	
	\item[(A2)]   $\mathsf{Y}\subseteq\R$,  $\mathcal{Y}=\mathfrak{B}(\R)\cap\mathsf{Y}$ (the standard  relative Euclidean $\sigma$-algebra), and $\mathscr{H}$   is a totally bounded metric space, i.e.\ $\mathscr{H}$ admits a metric $\uprho_\mathscr{H}$ such that for each $\epsilon>0$ there is finite $\mathscr{H}_\epsilon\subseteq\mathscr{H}$ with the property that for every $h\in\mathscr{H}$ there is $h_\epsilon\in\mathscr{H}_\epsilon$ with $\uprho_\mathscr{H}(h,h_\epsilon)<\epsilon$.
\end{description}

\smallskip

\noindent For $\epsilon>0$  let  $$\mathcal{N}(\epsilon,\mathscr{H},\uprho_\mathscr{H})\df\min\{|\mathscr{H}_\epsilon|\colon \mathscr{H}_\epsilon\text{ is an } \epsilon\text{-covering of } \mathscr{H} \text{ in the sense of (A2)}\}$$ be the $\epsilon$-covering number of $\mathscr{H}$. Typical examples of classes of functions satisfying (A2) are the following:
\begin{itemize}
	\item [(i)]  Let $\mathsf{X}=[0,T]$, let $\mathsf{Y}=[-V/2,V/2]$ and let $\mathscr{H}$ be a class of functions with total variation at most $V$, for some $T,V>0$. Further, let $\uprho_\mathscr{H}$ be the corresponding $\mathrm{L}^1$-metric (with respect to the Lebesgue measure). Then, in \cite[Theorem 1]{Bartlett-Kulkarni-Posner-1997} it has been shown that $$\mathcal{N}(\epsilon,\mathscr{H},\uprho_\mathscr{H})\le 2^{13VT/\epsilon}.$$
	\item [(ii)] Let $\mathsf{X}\subset\R^d$ be compact and  let $\mathscr{H}\subset\mathcal{C}^\gamma(\mathsf{X},\R)$   be bounded (with respect to the corresponding H\"{o}lder metric) for some $0<\gamma\le1$. 
	Then, for $\uprho_\mathscr{H}$ being the H\"{o}lder or sup-metric, in \cite[Theorem 4]{Zhou-2003} it has been shown that there is $C>0$  such that \begin{equation}\label{eq:ex}\mathcal{N}(\epsilon,\mathscr{H},\uprho_\mathscr{H})\le\E^{C\epsilon^{-2d/\gamma}}.\end{equation}
\end{itemize}

Further, for each $h\in\mathscr{H}$ define $\ell_h:\mathsf{Z}\to[0,\infty)$ by $$\ell_h(z)\df\ell\bigl(h(\mathrm{pr}_\mathsf{X}(z)), \mathrm{pr}_\mathsf{Y}(z)\bigr).$$ 
We next assume:

\medskip

\begin{description}
	\item[(A3)]  there are $L,\bar L\ge0$ such that $\displaystyle|\ell_{h_1}(z_1)-\ell_{h_2}(z_2)|\le L\uprho(z_1,z_2)+\bar L \uprho_\mathscr{H}(h_1,h_2)$ for all $z_1,z_2\in\sfZ$ and $h_1,h_2\in\mathscr{H}$.
\end{description}

\medskip

\noindent Examples of classes of functions for which (A3) holds are the following:
\begin{itemize}
	\item [(i)] Let $\mathsf{X}$ and $\mathscr{H}$ be as in (ii) above, with $\gamma=1$ (the class of Lipschitz continuous functions). Assume also that $\mathsf{Y}$ is bounded. Then, (A3) holds with $\ell(y,\bar y)=|y-\bar y|^2$, and $\uprho$ and  $\uprho_\mathscr{H}$ being the  Euclidean and sup metric, respectively.
	\item[(ii)] Let $\mathsf{X}$ and $\mathsf{Y}$ be as in (i), and let $\mathscr{H}\subseteq\{h\in\mathsf{Y}^\mathsf{X}\colon h \text{ is Lipschitz continuous and } h(x_0)=y_0\}$ for some $x_0\in\mathsf{X}$ and  $y_0\in\mathsf{Y}$. Then, (A3) holds with $\ell(y,\bar y)=|y-\bar y|^2$, and $\uprho$ and  $\uprho_\mathscr{H}$ being the Euclidean and Lipschitz metric, respectively.
\end{itemize}

Denote by $\mathcal{P}(z,\D \bar{z})$ the (one-step) transition kernel of $\{Z_n\}_{n\ge0}$, and let 
$\mathscr{P}_1(\sfZ)$ be the class of all probability measures on $\mathsf{Z}$ having finite first moment.
Recall that the
$\mathrm{L}^1$-Wasserstein distance on $\mathscr{P}_1(\sfZ)$  is defined by
\begin{equation*}
\mathscr{W}(\upmu_1,\upmu_2)\df\inf_{\Pi\in\mathcal{C}(\upmu_1,\upmu_2)}
 \int_{\sfZ\times\sfZ}\uprho(z,\bar z)\,
\Pi(\D{z},\D{\bar z}),
\end{equation*}
where $\mathcal{C}(\upmu_1,\upmu_2)$ is the family of couplings of
$\upmu_1(\D z)$ and $\upmu_2(\D z)$,
i.e. $\Pi\in\mathcal{C}(\upmu_1,\upmu_2)$ if, and only if, $\Pi(\D z, \D \bar z)$
is a probability
measure on $\sfZ\times\sfZ$ having $\upmu_1(\D z)$ and $\upmu_2(\D z)$ as its marginals.
We finally
assume:

\medskip

\begin{description}
	\item[(A4)]  $\mathsf{Y}$ is bounded and there is $\eta\in(0,1)$ such that $$\mathscr{W}\bigl(\mathcal{P}(z_1,\D \bar{z}),\mathcal{P}(z_2,\D \bar z)\bigr)\le(1-\eta)\,\uprho(z_1,z_2).$$
\end{description}

\medskip

\noindent An example satisfying (A4) is as follows.
Let $\{\chi_n\}_{n\geq1}$ be a sequence of i.i.d. $\R$-valued random variables defined on a probability space $(\Omega,\mathcal{F},\mathbb{P})$, satisfying $\mathbb{P}(\chi_n=0)=\mathbb{P}(\chi_n=1/2)=1/2$. Define $$X_{n+1}\df\frac{1}{2}X_n+\chi_{n+1}$$ with $X_0=x\in[0,1]$.
Clearly, $\chain{X}$ is a Markov chain on  $\mathsf{X}=[0,1]$ with transition kernel $\mathcal{P}_X(x,\D\bar x)=\mathbb{P}(\chi_1+x/2\in \D \bar x)$. 
Further, let $f:\mathsf{X}\to\mathsf{Y}=\R$ be Lipschitz continuous with Lipschitz constant $\mathrm{Lip}(f)<\sqrt{3}$ (hence, $f(x)$ is of bounded variation), and set $Z_n\df (X_n,f(X_n))$ for $n\ge0$. Then, $\chain{Z}$ is a Markov chain on $\sfZ=\{(x,f(x))\colon x\in\mathsf{X}\}$ with transition kernel $\mathcal{P}(z,\D\bar z)=\mathbb{P}(\chi_1+\mathrm{pr}_\mathsf{X}(z)/2\in\D\mathrm{pr}_\mathsf{X}(\bar z))$, see \Cref{LM}.
For $\uprho$ we take the standard Euclidean metric. 
Clearly, $\chain{Z}$ is not irreducible. Namely, for $z\in\sfZ$ such that  $\mathrm{pr}_\mathsf{X}(z)\in\mathbb{Q}$ it holds that $\mathcal{P}^n(z,\sfZ\cap(\mathbb{Q}^c\times\R))=0$ for all $n\ge1$, and analogously for $z\in\sfZ$ such that  $\mathrm{pr}_\mathsf{X}(z)\in\mathbb{Q}^c$ it holds that $\mathcal{P}^n(z,\sfZ\cap(\mathbb{Q}\times\R))=0$ for all $n\ge1$.
Further, let ${\rm Length}(\D z)$ be the arc-length measure on $\sfZ$. It is then easy to see that 
$\uppi(\D z)\df {\rm Length}(\D z)/{\rm Length}(\sfZ)$ satisfies $\int_\sfZ \mathcal{P}( z,\D\bar z)\uppi(\D z)=\uppi(\D\bar z),$ i.e.\ $\uppi(\D z)$ is an invariant probability measure for $\{Z_n\}_{n\ge0}$. Here,  ${\rm Length}(\sfZ)$ is the length of  $\sfZ$.
Observe that $\uppi(\D \bar z)$ and $\mathcal{P}(z,\bar z)$ are mutually  singular, implying that the relation in \cref{eq:TV1} cannot hold. Finally,  we show the relation in (A4). By  Kantorovich-Rubinstein theorem we have that
$$
\mathscr{W}(\upmu_1,\upmu_2) = \sup_{\{g\colon\mathrm{Lip}(g)\le1\}}\,
| \upmu_1(g)-\upmu_2(g)|,
$$
where the supremum is taken over all Lipschitz continuous functions
$g\colon\sfZ\to\R$ with Lipschitz constant $\mathrm{Lip}(g)\le1$ and, for a probability measure $\upmu(\D z)$ on $\sfZ$ and a measurable function $f:\sfZ\to\R$, the symbol $\upmu(f)$ stands for $\int_\sfZ f(z)\upmu(\D z)$, whenever the integral is well defined.
Thus, for any $z_1=(x_1,f(x_1)),z_2=(x_2,f(x_2))\in\sfZ$ it holds that
\begin{align*}
&\mathscr{W}(\mathcal{P}(z_1,\bar z),\mathcal{P}(z_2,\bar z)) \\
&= \sup_{\{g\colon\mathrm{Lip}(g)\le1\}}\,
\left|\int_{\mathsf{Z}} g(\bar z) \mathbb{P}\bigl(\chi_1+\mathrm{pr}_\mathsf{X}(z_1)/2\in\D\mathrm{pr}_\mathsf{X}(\bar z)\bigr)-\int_{\mathsf{Z}} g(\bar z) \mathbb{P}\bigl(\chi_1+\mathrm{pr}_\mathsf{X}(z_2)/2\in\D\mathrm{pr}_\mathsf{X}(\bar z)\bigr)\right|\\
&=\sup_{\{g\colon\mathrm{Lip}(g)\le1\}}\,
\Bigg|\frac{1}{2}\left(g\left(x_1/2,f\left(x_1/2\right)\right) -g\left(x_2/2,f\left(x_2/2\right)\right)\right)\\&\ \ \ +\frac{1}{2}\left(g\left((x_1+1)/2,f\left((x_1+1)/2\right)\right)-g\left((x_2+1)/2,f\left((x_2+1)/2)\right)\right)\right)\Bigg|\\
&\le \frac{1}{2}\uprho\left(\left(x_1/2,f\left(x_1/2\right)\right),\left(x_2/2,f\left(x_2/2\right)\right)\right)\\&\ \ \ +\frac{1}{2}\uprho\left(\left((x_1+1)/2,f\left((x_1+1)/2\right)\right),\left((x_2+1)/2,f\left((x_2+1)/2\right)\right)\right)\\
&\le \frac{\sqrt{1+\mathrm{Lip}(f)^2}}{2}|x_1-x_2|\\
&\le \frac{\sqrt{1+\mathrm{Lip}(f)^2}}{2} \uprho(z_1,z_2),
\end{align*}
which proves (A4) with $\eta=1-\sqrt{1+\mathrm{Lip}(f)^2}/2$ (recall that $\mathrm{Lip}(f)<\sqrt{3}$).

According to \cite[Theorem 2.1]{Butkovsky-2014}  (by taking $V\equiv1$, $\varphi(t)=t$ and $K\ge 1$) the relation in (A4) implies that 
\begin{itemize}
	\item [(i)] $\{Z_n\}_{n\ge0}$ admits a unique invariant probability measure $\uppi(\D z)$;
	\item [(ii)]  there are $C_1,C_2>0$ such that \begin{equation}\label{eq1.1}\mathscr{W}\bigl(\mathcal{P}^n( z,\D\bar z),\uppi(\D \bar z)\bigr)\le  C_1\E^{-C_2n}.\end{equation}
\end{itemize}

Before stating the main results of this article, we introduce some notation we need. For $n\in\N$, $z\in\sfZ$ and $h\in\mathscr{H}$, let $\hat{\rm er}_n(h)\df \frac{1}{n}\sum_{i=0}^{n-1}\ell_h(Z_i)$, ${\rm er}_\uppi(h)\df  \uppi(\ell_h)$, $\ell_h^\uppi(z)\df \ell_h(z)-{\rm er}_\uppi(h)$ and ${\rm opt}_\uppi(\mathscr{H})\df\inf_{h\in\mathscr{H}}{\rm er}_\uppi(h)$. Observe that, according to (A3) and boundedness of $\uprho$, ${\rm er}_\uppi(h)$ and $\ell_h^\uppi(z)$ are well defined.
Next, for 
 $\varepsilon>0$ the $\varepsilon$-approximate sample error minimization ($\varepsilon$-ASEM) algorithm for $\mathscr{H}$ is defined  as a mapping $\mathcal{A}^\varepsilon:\bigcup_{n=1}^\infty\sfZ^n\to \mathscr{H}$ satisfying $$\frac{1}{n} \sum_{i=0}^{n-1}\ell_{\mathcal{A}^\varepsilon(z_0,\dots,z_{n-1})}(z_i)<\inf_{h\in\mathscr{H}} \frac{1}{n} \sum_{i=0}^{n-1}\ell_{h}(z_i)+\varepsilon.$$
\begin{theorem}\label{TM1.1} Assume (A1)-(A4), and fix $\varepsilon,\delta\in(0,1)$. Then, for any initial distribution $\upmu(\D z)$ of $\{Z_n\}_{n\ge0}$,
	$$\Prob_\upmu\left(|\mathrm{er}_\uppi\bigl(\mathcal{A}^\varepsilon(Z_0,\dots,Z_{n-1})\bigr)-\mathrm{opt}_\uppi(\mathscr{H})|<5\varepsilon\right)\ge 1-\delta \qquad \forall n\ge n_1(\varepsilon,\delta),$$
	where $$n_1(\varepsilon,\delta)\df \max\left\{\frac{16C_1L}{\varepsilon(1-\E^{-C_2})},\frac{128C_1^2L^2\ln\frac{\mathcal{N}(\varepsilon/4\bar L,\mathscr{H},\mathsf{d}_\mathscr{H})}{\delta}}{\varepsilon^2(1-\E^{-C_2})^2}\right\}.$$
	\end{theorem}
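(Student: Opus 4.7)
The plan is first to establish a uniform convergence result
$$\Prob_\upmu\Bigl(\sup_{h\in\mathscr{H}} |\hat{\mathrm{er}}_n(h) - \mathrm{er}_\uppi(h)| \leq 2\varepsilon\Bigr) \geq 1-\delta\quad\text{for all } n\geq n_1(\varepsilon,\delta),$$
and then to deduce the ASEM bound by the standard chain: on this event, if $h^\ast \in \mathscr{H}$ satisfies $\mathrm{er}_\uppi(h^\ast)<\mathrm{opt}_\uppi(\mathscr{H})+\eta$ for arbitrarily small $\eta>0$, then by the defining property of $\mathcal{A}^\varepsilon$,
$$\mathrm{er}_\uppi(\mathcal{A}^\varepsilon) \leq \hat{\mathrm{er}}_n(\mathcal{A}^\varepsilon)+2\varepsilon < \hat{\mathrm{er}}_n(h^\ast) + 3\varepsilon \leq \mathrm{er}_\uppi(h^\ast) + 5\varepsilon < \mathrm{opt}_\uppi(\mathscr{H})+5\varepsilon+\eta,$$
which, combined with $\mathrm{er}_\uppi(\mathcal{A}^\varepsilon)\geq \mathrm{opt}_\uppi(\mathscr{H})$ and $\eta\downto 0$, yields the theorem.

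To reach the uniform bound I would discretize $\mathscr{H}$ via (A1): pick an $\varepsilon/(4\bar L)$-cover $\mathscr{H}_\epsilon\subseteq\mathscr{H}$ of cardinality $\mathcal{N}(\varepsilon/(4\bar L),\mathscr{H},\uprho_\mathscr{H})$. By (A2), replacing $h$ by its nearest representative $h_\epsilon\in\mathscr{H}_\epsilon$ changes $\hat{\mathrm{er}}_n$ and $\mathrm{er}_\uppi$ each by at most $\varepsilon/4$, so it suffices to produce a simultaneous pointwise bound $|\hat{\mathrm{er}}_n(h_\epsilon)-\mathrm{er}_\uppi(h_\epsilon)|\leq 3\varepsilon/2$ over $\mathscr{H}_\epsilon$. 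A union bound then reduces the task to bounding, for each single $h\in\mathscr{H}_\epsilon$, the failure probability by $\delta/\mathcal{N}(\varepsilon/(4\bar L),\mathscr{H},\uprho_\mathscr{H})$.

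The core of the argument is the pointwise concentration of $\frac{1}{n}\sum_{i=0}^{n-1}\ell_h^\uppi(Z_i)$. I would solve the Poisson equation $\psi_h-\mathcal{P}\psi_h=\ell_h^\uppi$ by setting $\psi_h\df\sum_{k=0}^{\infty}\mathcal{P}^k\ell_h^\uppi$. Since $\ell_h$ is $L$-Lipschitz in $z$ by (A2) (and the centering does not affect this), Kantorovich-Rubinstein duality together with \eqref{eq1.1} gives
$$|\mathcal{P}^k\ell_h^\uppi(z)|=\Bigl|\int\ell_h\bigl(\mathcal{P}^k(z,\cdot)-\uppi\bigr)\Bigr|\leq L\,\mathscr{W}(\mathcal{P}^k(z,\cdot),\uppi)\leq LC_1\E^{-C_2 k},$$
so that $\psi_h$ is a bounded measurable function with $\|\psi_h\|_\infty\leq LC_1/(1-\E^{-C_2})$. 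Writing $\ell_h^\uppi(Z_i)=\bigl(\psi_h(Z_i)-\psi_h(Z_{i+1})\bigr)+\bigl(\psi_h(Z_{i+1})-\mathcal{P}\psi_h(Z_i)\bigr)$ splits the sum into a telescoping boundary term $\psi_h(Z_0)-\psi_h(Z_n)$ of modulus at most $2\|\psi_h\|_\infty$, and a martingale $M_n$ with increments bounded by $2\|\psi_h\|_\infty$. Imposing that the boundary contribution $2\|\psi_h\|_\infty/n$ be absorbed into a fraction of $\varepsilon$ produces the first term $16C_1L/(\varepsilon(1-\E^{-C_2}))$ in $n_1(\varepsilon,\delta)$, while the Azuma-Hoeffding inequality applied to $M_n$, tuned so that its failure probability matches the union-bound target $\delta/\mathcal{N}$, produces the second term $128C_1^2L^2\ln(\mathcal{N}/\delta)/(\varepsilon^2(1-\E^{-C_2})^2)$. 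The dependence on $\upmu$ enters only through $|\mathbb{E}_\upmu[\psi_h(Z_n)]|\leq\|\psi_h\|_\infty$, so the bound holds uniformly in the initial distribution.

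The main obstacle is precisely this pointwise step: because (A3) is only a Wasserstein-type contraction and $\uppi$ may be mutually singular with $\mathcal{P}(z,\cdot)$ (as illustrated by the rational/irrational example in the introduction), the standard Markov-chain Hoeffding inequalities based on Doeblin-type minorization are unavailable. The Poisson-equation martingale decomposition is the natural workaround: it only requires boundedness of $\psi_h$, which follows cleanly from (A3) via \eqref{eq1.1}, after which concentration reduces to the martingale inequality and the rest of the proof is bookkeeping — splitting the tolerance $2\varepsilon$ among the covering, boundary, and martingale contributions to match the stated constants.
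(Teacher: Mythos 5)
Your proposal follows essentially the same route as the paper's proof: solve the Poisson equation $g=\sum_{k\ge0}\mathcal{P}^k\ell_h^{\uppi}$, bound $\lVert g\rVert_\infty$ via Kantorovich--Rubinstein duality and \cref{eq1.1}, apply the Glynn--Ormoneit martingale decomposition with an Azuma/Hoeffding bound, and finish with the covering-number union bound from (A1)--(A2) and the standard ASEM chain of inequalities. The only difference is constant bookkeeping: to land exactly on the stated $n_1(\varepsilon,\delta)$ one should use Hoeffding's lemma with the martingale increments conditionally confined to an interval of length $2\lVert g\rVert_\infty$ (as in Glynn--Ormoneit) rather than the cruder symmetric range $[-2\lVert g\rVert_\infty,2\lVert g\rVert_\infty]$, but this does not affect the correctness of your argument.
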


\noindent Notice that in the case when $\mathcal{N}(\varepsilon, \mathscr{H}, \mathsf{d}_{\mathscr{H}}) \le \E^{C\varepsilon^{-2d / \gamma}}$ (see \cref{eq:ex}) the sample size in \Cref{TM1.1}  is of order $\varepsilon^{-(2 + 2d / \gamma)}$. 
If we additionally assume  that $\chain{Z}$ satisfies  Doeblin condition: there are a probability measure $\upphi(\D z)$ on $\mathsf{Z}$, $\alpha,\beta\in(0,1)$ and $n\in\N$, such that whenever $\upphi(B)>\alpha$,
$$\inf_{z\in\mathsf{Z}}\mathcal{P}^n(z,B)>\beta,$$ 
in \cite[(10)]{Zou-Zhang-Xu-2009} it has been shown that the sample size   is again of order $\varepsilon^{-(2 + 2d / \gamma)}$. Recall that under (A1) the Doeblin condition implies \cref{eq1.1}, and hence the results from \Cref{TM1.1} (by additionally assuming (A2)-(A3)), see \cite[Theorem 16.0.2]{Meyn-Tweedie-Book-2009} and \cite[Theorem 6.15]{Villani-Book-2009}.
On the other hand, if we relax the Doeblin condition and assume that $\chain{Z}$ is $\mathcal{V}$-ergodic (see \cref{eq:TV1}), in \cite[Proposition 1]{Zou-Xu-Chang-2012} it has been shown that the sample size has higher order: $\varepsilon^{-(4 + 4d / \gamma)}$. 
Let us remark that   Doeblin condition implies $\mathcal{V}$-ergodicity (see  \cite[Theorem 16.0.2]{Meyn-Tweedie-Book-2009}), which in turn implies irreducibility. Generally speaking, this means that irreducibility property itself neither positively nor negatively affects the sample size order.

The main step in the proof of \Cref{TM1.1} is the following sample error uniform convergence result 
$$\Prob_z\left(\sup_{h\in\mathscr{H}}|\hat{\mathrm{er}}_n(h)-\mathrm{er}_\uppi(h)|>\varepsilon\right) \le \mathcal{N}(\varepsilon/(4\bar L),\mathscr{H},\mathsf{d}_\mathscr{H})\E^{-\bigl(\frac{\varepsilon n(1-\E^{-C_2})}{4C_1L}-2\bigr)^2/(2n)}\quad   \forall n\ge \frac{8C_1L}{\varepsilon(1-\E^{-C_2})},$$
(see \cref{eq:unif}).  
In the case when $\mathcal{N}(\varepsilon, \mathscr{H}, \mathsf{d}_{\mathscr{H}}) \le \E^{C\varepsilon^{-2d / \gamma}}$ the sample size in the above relation  is again of order $\varepsilon^{-(2 + 2d / \gamma)}$. On the other hand, one expects that the sample size  needed to ensure that $(1 + \alpha)\hat{\rm er}_n(h) + \varepsilon \ge \mathrm{er}_{\uppi}(h)$ for some $\alpha, \varepsilon > 0$ is of much smaller order. This problem 
was  studied in \cite[Subsection 5.5]{Anthony-Bartlett-Book-1999}.  Crucial step in the proof of this result  is to  study the uniform relative error \begin{equation*}
	\sup_{h \in \mathscr{H}} \frac{\left| \mathrm{er}_{\uppi}(h) - \hat{\rm er}_n(h) \right|}{\sqrt{\mathrm{er}_{\uppi}(h)}},
\end{equation*} instead of the uniform standard  error
\begin{equation*}
	\sup_{h \in \mathscr{H}} \left| \mathrm{er}_{\uppi}(h) - \hat{\rm er}_n(h) \right|.
\end{equation*}
Studying this quantity is additionally motivated by the observation that a uniform convergence bound fails to capture the phenomenon that for those hypotheses $h \in \mathscr{H}$ for which the true error $\mathrm{er}_{\uppi}(h)$ is small, the deviation $\mathrm{er}_{\uppi}(h) - \hat{\rm er}_n(h)$ is also small with large probability. However, the relative deviation $(\mathrm{er}_{\uppi}(h) - \hat{\rm er}_n(h)) / \sqrt{\mathrm{er}_{\uppi}(h)}$ is more appropriate for capturing this phenomenon (see \cite[Section 4]{Zou-Zhang-Xu-2009}). With relative error we penalize the difference between the sample error and the true error much more when the true error is very small in the first place. Further, notice that studying relative error makes
sense only when the true error is bounded away from zero (see  \cref{eq:mM} below),  since relative deviation $(\hat{\mathrm{er}}_{\uppi}(f)-\mathrm{er}_{\uppi}(f))  / \sqrt{\mathrm{er}_{\uppi}(f)}$ is not well defined for the true function $f(x)$.  Even though this assumption does seem a bit unnatural, in many practical applications there will not even be a true function in the background, but some distribution, and even if there will be a functional relationship between the input and the output observable, this relationship can often be pretty complex and hence not contained in the class $\mathscr{H}$. 

\begin{theorem}\label{TM1.2}
Assume (A1)-(A4) and assume additionally that there exists a constant $m> 0$ such that 
\begin{equation}\label{eq:mM}
	 \inf_{h\in\mathscr{H}}\mathrm{er}_{\uppi}(h)\ge m.
\end{equation} 
Fix $\alpha>0$ and $\varepsilon,\delta\in(0,1)$. Then, for any initial distribution $\upmu(\D z)$ of $\{Z_n\}_{n\ge0}$,
\begin{equation*}
\mathbb{P}_{\upmu}\left( \exists\, h \in \mathscr{H} : \mathrm{er}_{\uppi}(h) > (1 + \alpha) \hat{\rm er}_n(h) + \varepsilon \right) \le \delta \qquad \forall n \ge n_2(\varepsilon, \delta),
\end{equation*}
where
\begin{align*}
n_2(\varepsilon,\delta)
& \df \max\left\{ \frac{8C_1 L\sqrt{1+1/\alpha}}{\sqrt{m\varepsilon} (1 - \E^{-C_2})}, \right. \\
& \qquad \qquad \left. \frac{32C_1^2L^2(1+1/\alpha)}{m\varepsilon(1-\E^{-C_2})^2} \left( \frac{\sqrt{m\varepsilon}(1-\E^{-C_2})}{C_1 L\sqrt{1+1/\alpha}} + \ln \left( \frac{\mathcal{N}\left( \frac{\sqrt{m\varepsilon}}{4\bar{L} \sqrt{1 + 1/\alpha}}, \mathscr{H}, \mathsf{d}_{\mathscr{H}} \right)}{\delta} \right) \right) \right\}.
\end{align*}
	\end{theorem}

\noindent 
Observe that in the case when $\mathcal{N}(\varepsilon, \mathscr{H}, \mathsf{d}_{\mathscr{H}}) \le \E^{C \varepsilon^{-2d / \gamma}}$ the sample size in \Cref{TM1.2} is  of order $\varepsilon^{-(1 + d / \gamma)}$, which is  smaller than $\varepsilon^{-(2 + 2d / \gamma)}$, as conjectured.


\subsection{Literature review}  
Our work contributes to the understanding of  statistical properties of supervised learning problems. Most of the existing literature focuses on the case when the training data set is drawn from an i.i.d.\ sample, see the classical monographs \cite{Anthony-Bartlett-Book-1999} and \cite{Shalev-Shwartz-Ben-David-Book-2014}. However, many practical supervised learning problems (e.g.\ speech recognition, object tracking, market prediction, etc.) are characterized by temporal  dependence and strong correlation between the marginals of the data-generating process, suggesting that the i.i.d.\ assumption  is not always justified. 
This problem has been first addressed  in \cite{Yu-1994}, \cite{Meir-2000} and \cite{Vidyasagar-Book-2003} in the context of stationary $\beta$-mixing sequences, and in \cite{Modha-Masry-1996} in the case of  stationary $m$-dependent $\alpha$-mixing sequences. In  \cite{Steinwart-Hush-Scovel-2009} the authors relax the stationarity assumption and require that the
  data-generating process satisfies a certain law of large numbers only. This, for example, includes (not necessarily stationary) $\alpha$-mixing sequences and Markov chains satisfying the Doeblin condition. Recall that the Doeblin condition implies irreducibility, aperiodicity and uniform exponential ergodicity of the chain  (see \cite[Theorem 16.0.2]{Meyn-Tweedie-Book-2009}). The learning problem of the later model was further investigated in a series of articles \cite{Gamarnik-2003}, \cite{Zou-Zhang-Xu-2009}, \cite{Zou-Li-Xu-2012}, \cite{Zou-Li-Xu-Luo-Tang-2013} and \cite{Zou-Xu-Xu-2014}.
  We remark that a crucial step in all these works is the Hoeffding's inequality for uniformly ergodic Markov chains, obtained in \cite{Glynn-Ormoneit-2002}. 
  Generalization of these results to irreducible aperiodic   $\mathcal{V}$-ergodic Markov chains has been obtained in \cite{Zou-Xu-Xu-2014}. Actually  the proof of the main results of this article (Theorems 1 and 2) is taken from \cite[Theorem 3.5]{Vidyasagar-Book-2003}. We also remark that these results do not give generalization bounds in the full sense. Namely, a crucial assumption in the article is  that the  data-generating process starts from its invariant measure, which is unrealistic. A way to partly fix this gap is presented below.

Recall that  the chain $\{Z_n\}_{n\ge0}$ is said to be
\begin{enumerate}
	\item [\ttup{i}]
	aperiodic if
	there does not exist a partition $\{B_1,\dots,B_k\}\subseteq\mathfrak{B}(\sfZ)$ with
	$k\ge2$ of $\sfZ$ such that $\mathcal{P}(z,B_{i+1}) = 1$ for all $z\in B_i$ and all
	$1 \le i \le k -1$, and $\mathcal{P}(z, B_1) = 1$ for all $z \in B_k$.
\item [\ttup{ii}] $\mathcal{V}$-ergodic if it admits an invariant probability measure $\uppi(\D z)$ and there are $\mathcal{V}:\mathsf{Z}\to[1,\infty)$ with $\int_\sfZ\mathcal{V}(z)\uppi(\D z)<\infty$, $C>0$ and $\kappa\in(0,1)$, such that \begin{equation}\label{eq:TV1}\lVert\mathcal{P}^n(z,\D\bar z)-\uppi(\D\bar z)\rVert_\mathcal{V}\le C\mathcal{V}(z)\kappa^n\end{equation} for all $z\in\mathsf{Z}$ and $n\ge0$.  
\end{enumerate}
Here, for a signed measure $\upmu(\D\mathsf{z})$ on $\mathsf{Z}$, $$ \lVert\upmu(\D z)\rVert_\mathcal{V}\df \sup_{|f|\le\mathcal{V}}|\upmu(f)|.$$
Further, the $\beta$-mixing  (or complete regularity, or the Kolmogorov) coefficient of the  chain $\{Z_n\}_{n\ge0}$ with initial distribution $\upmu(\D z)$ is defined as $$\beta^\upmu(n)\df\sup_{m\ge0}\mathbb{E}_\mu\left[\sup_{B\in\sigma\{Z_k: k\ge m+n\}}|\mathbb{P}_\upmu(B|\mathcal{F}_m)-\mathbb{P}_\upmu(B)|\right].$$  If $\uppi(\D z)$ is an invariant probability measure of $\{Z_n\}_{n\ge0}$, then, by employing the Markov property  and stationarity, 
$$\beta^\uppi(n)=\int_{\mathsf{Z}}\lVert\mathcal{P}^n(z,\D \bar z)-\uppi(\D \bar z)\rVert_\mathrm{TV}\uppi(\D  z).$$
In particular, if $\{Z_n\}_{n\ge0}$ is $\upvarphi$-irreducible, aperiodic and $\mathcal{V}$-ergodic with invariant probability measure $\uppi(\D z)$, then $$\beta^\uppi(n)\le\int_{\mathsf{Z}}\lVert\mathcal{P}^n(z,\D \bar z)-\uppi(\D \bar z)\rVert_\mathcal{V}\uppi(\D z)\le C\uppi(\mathcal{V})\kappa^n$$ for all $n\ge0.$ 
Under this assumptions, (A1)-(A2) and

\medskip

\begin{description}
	\item[\textbf{($\overline {\text{A3}})$}] $\displaystyle B\df \sup_{z\in\sfZ,\, h\in\mathscr{H}}\ell_h(z)<\infty$ and  $\displaystyle|\ell_{h_1}(z)-\ell_{h_2}(z)|\le\bar  L \uprho_\mathscr{H}(h_1,h_2)$ for some $\bar L>0$, and  all $z\in\sfZ$ and $h_1,h_2\in\mathscr{H}$
\end{description}

\medskip

\noindent  in \cite[Theorem 2]{Zou-Xu-Chang-2012} (as we have already commented, the proof of this result has been taken from \cite[Theorem 3.5]{Vidyasagar-Book-2003}) it has been shown that
for fixed $\varepsilon\in(0,3B)$,
\begin{align*}\Prob_\uppi\left(\sup_{h\in\mathscr{H}}|\hat{\mathrm{er}}_n(h)-\mathrm{er}_\uppi(h)|>\varepsilon\right)&\le 2(1+C\uppi(\mathcal{V})\E^{-2})\mathcal{N}(\varepsilon/4 \bar L,\mathscr{H},\uprho_\mathscr{H})\E^{-\frac{\sqrt{n}\sqrt{\ln(\kappa^{-1})}\varepsilon^2}{16\sqrt{2}B^2}}\end{align*} for all $$n\ge n_0(\varepsilon)\df\max\left\{\frac{16+8\sqrt{4+\ln(\kappa^{-1})^2}+8\ln(\kappa^{-1})^2}{\ln(\kappa^{-1})^2},\frac{\ln(\kappa^{-1})}{8}\right\}.$$
From this it follows (see the proof of \Cref{TM1.1}) that  for fixed $\varepsilon>0$ and $\delta\in(0,1)$, and an $\varepsilon$-ASEM algorithm  $\mathcal{A}^\varepsilon$  for $\mathscr{H}$, 
we have that
\begin{equation}\label{eq:TV2}\Prob_\uppi\left(|\mathrm{er}_\uppi\bigl(\mathcal{A}^\varepsilon(Z_0,\dots,Z_{n-1})\bigr)-\mathrm{opt}_\uppi(\mathscr{H})|<5\varepsilon\right)\ge 1-\delta\qquad \forall\, n\ge n_0(\varepsilon,\delta),\end{equation} where  $$n_0(\varepsilon,\delta)\df\max\left\{n_0(\varepsilon),\frac{512 B^4}{\ln(\kappa^{-1})\varepsilon^4}\ln\left(\frac{2(1+C\uppi(\mathcal{V})\E^{-2})\mathcal{N}(\varepsilon/4 \bar L,\mathscr{H},\uprho_\mathscr{H})}{\delta}\right)^2\right\}.$$
However, as we have already commented, this result  is restrictive and impractical since it assumes that the process is initially distributed according to the invariant distribution $\uppi(\D z)$ which is in general unknown to us. It is more natural, and in the spirit of generalization bounds, that the relation in \cref{eq:TV2} holds for every initial distribution of $\{Z_n\}_{n\ge0}$. This result can be slightly generalized by employing (i) H\"{o}lder inequality, or (ii) ergodicity property in \cref{eq:TV1}.

\begin{itemize}
	\item [(i)] Let $\upmu(\D z)$ be any initial distribution of $\{Z_n\}_{n\ge0}$ such that $\upmu\ll\uppi$. Assume also that there is $p\in(1,\infty]$ such that $ \frac{\D\upmu}{\D\uppi}\in\mathrm{L}^p(\uppi)$. Then,
	\begin{align*}&\Prob_\upmu\left(\sup_{h\in\mathscr{H}}|\hat{\mathrm{er}}_n(h)-\mathrm{er}_\uppi(h)|>\varepsilon\right)\\
	&=\int_{\mathsf{Z}}\Prob_z\left(\sup_{h\in\mathscr{H}}|\hat{\mathrm{er}}_n(h)-\mathrm{er}_\uppi(h)|>\varepsilon\right)\upmu(\D z)\\
	&=\int_{\mathsf{Z}}\Prob_z\left(\sup_{h\in\mathscr{H}}|\hat{\mathrm{er}}_n(h)-\mathrm{er}_\uppi(h)|>\varepsilon\right)\frac{\D\upmu}{\D\uppi}(z)\uppi(\D z)\\
	&\le \bigl\lVert\frac{\D\upmu}{\D\uppi}\bigr\rVert_p\left(\int_{\mathsf{Z}}\Prob_z\left(\sup_{h\in\mathscr{H}}|\hat{\mathrm{er}}_n(h)-\mathrm{er}_\uppi(h)|>\varepsilon\right)^q\uppi(\D z)\right)^{1/q}\\
	&\le \bigl\lVert\frac{\D\upmu}{\D\uppi}\bigr\rVert_p\Prob_\uppi\left(\sup_{h\in\mathscr{H}}|\hat{\mathrm{er}}_n(h)-\mathrm{er}_\uppi(h)|>\varepsilon\right)^{1/q},\end{align*} where $q\in[1,\infty)$ is such that $1/p+1/q=1$. Thus,
	for fixed $\varepsilon>0$ and $\delta\in(0,1)$, and an $\varepsilon$-ASEM algorithm  $\mathcal{A}^\varepsilon$  for $\mathscr{H}$, 
	we have that
	$$\Prob_\upmu\left(|\mathrm{er}_\uppi\bigl(\mathcal{A}^\varepsilon(Z_0,\dots,Z_{n-1})\bigr)-\mathrm{opt}_\uppi(\mathscr{H})|<5\varepsilon\right)\ge 1-\delta$$ for all $$n\ge\max\left\{n_0(\varepsilon,\delta),\frac{512 B^4q^2}{\ln(\kappa^{-1})\varepsilon^4}\ln\left(\frac{\bigl(2(1+C\uppi(\mathcal{V})\E^{-2})\mathcal{N}(\varepsilon/4 \bar L,\mathscr{H},\uprho_\mathscr{H})\bigr)^{1/q}\bigl\lVert\frac{\D\upmu}{\D\uppi}\bigr\rVert_p}{\delta}\right)^2\right\} .$$
	
	\item [(ii)] Let now $\upmu(\D z)$ be any initial distribution of $\{Z_n\}_{n\ge0}$ such that $\upmu(\mathcal{V})<\infty$. Fix $\varepsilon>0$ and $\delta\in(0,1)$, and let $\mathcal{A}^\varepsilon$  be an $\varepsilon$-ASEM algorithm    for $\mathscr{H}$. Then, for any $n\ge n_0(\varepsilon,\delta)$ the relation in \cref{eq:TV2} implies
	\begin{align*}
	&\Prob_\upmu\left(|\mathrm{er}_\uppi\bigl(\mathcal{A}^\varepsilon(Z_n,\dots,Z_{2n-1})\bigr)-\mathrm{opt}_\uppi(\mathscr{H})|\ge5\varepsilon\right)\\
	&\le \Prob_\uppi\left(|\mathrm{er}_\uppi\bigl(\mathcal{A}^\varepsilon(Z_0,\dots,Z_{n-1})\bigr)-\mathrm{opt}_\uppi(\mathscr{H})|\ge5\varepsilon\right)\\&\ \ \ +\bigl|\Prob_\upmu\left(|\mathrm{er}_\uppi\bigl(\mathcal{A}^\varepsilon(Z_n,\dots,Z_{2n-1})\bigr)-\mathrm{opt}_\uppi(\mathscr{H})|\ge5\varepsilon\right)\\&\hspace{1cm}-\Prob_\uppi\left(|\mathrm{er}_\uppi\bigl(\mathcal{A}^\varepsilon(Z_0,\dots,Z_{n-1})\bigr)-\mathrm{opt}_\uppi(\mathscr{H})|\ge5\varepsilon\right)\bigr|\\
	&<\delta+\big|\mathbb{E}_\upmu\bigl[\Prob_{Z_n}\left(|\mathrm{er}_\uppi\bigl(\mathcal{A}^\varepsilon(Z_0,\dots,Z_{n-1})\bigr)-\mathrm{opt}_\uppi(\mathscr{H})|\ge5\varepsilon\right)\bigr]\\&\hspace{1.3cm} -\mathbb{E}_\uppi\bigl[\Prob_{Z_n}\left(|\mathrm{er}_\uppi\bigl(\mathcal{A}^\varepsilon(Z_0,\dots,Z_{n-1})\bigr)-\mathrm{opt}_\uppi(\mathscr{H})|\ge5\varepsilon\right)\bigr]\big|\\
	&\le\delta+\left|\int_{\mathsf{Z}}\Prob_{z}\left(|\mathrm{er}_\uppi\bigl(\mathcal{A}^\varepsilon(Z_0,\dots,Z_{n-1})\bigr)-\mathrm{opt}_\uppi(\mathscr{H})|\ge5\varepsilon\right)
	\bigl(\upmu\mathcal{P}^n(\D z)-\uppi(\D z)\bigr)\right|\\
	&\le \delta+ \lVert\upmu\mathcal{P}^n(\D z)-\uppi(\D z)\rVert_\mathrm{TV}\\
	&\le \delta+C\upmu(\mathcal{V})\kappa^n.
	\end{align*}

\end{itemize}

In addition to the above assumptions assume that $\{Z_n\}_{n\ge0}$ is also reversible, i.e.  $$\int_A\mathcal{P}(z,B)\uppi(\D z)=\int_B\mathcal{P}(z,A)\uppi(\D z)\qquad\forall\, A,B\in\mathfrak{B}(\sfZ).$$ 
Then, according to \cite[Lemma 2.2]{Kontoyiannis-Meyn-2012} and \cite[Theorem 2.1]{Roberts-Rosenthal-1997},
 it admits a spectral gap in  $\mathrm{L}^2(\uppi)$, i.e.
 $$\lambda\df\sup_{f\in\mathrm{L}^2(\uppi),\, \lVert f\rVert_2=1}\lVert\mathcal{P}f-\uppi(f)\rVert_2<1.$$ 
Now,  from \cite[Theorem 3.3]{Miasojedow-2014}  it follows that 
$$\Prob_\uppi\bigl(|\hat{\mathrm{er}}_n(h)-\mathrm{er}_\uppi(h)|>\varepsilon\bigr)\le\E^{-2\frac{1-\lambda}{1+\lambda}\varepsilon^2n}\qquad \forall\, n\ge0.$$ 
From this, analogously as in the proof of \Cref{TM1.1}, we conclude that
 \begin{align*}\Prob_\uppi\left(\sup_{h\in\mathscr{H}}|\hat{\mathrm{er}}_n(h)-\mathrm{er}_\uppi(h)|>\varepsilon\right) &\le \mathcal{N}(\varepsilon/4\bar L,\mathscr{H},\uprho_\mathscr{H})\E^{-\frac{1-\lambda}{2(1+\lambda)}\varepsilon^2n}\qquad \forall\, n\ge0.\end{align*}
Let $\upmu(\D z)$ be any initial distribution of $\{Z_n\}_{n\ge0}$ such that $\upmu\ll\uppi$ and $ \frac{\D\upmu}{\D\uppi}\in\mathrm{L}^p(\uppi)$ for some $p\in(1,\infty]$. Then, 
	\begin{align*}\Prob_\upmu\left(\sup_{h\in\mathscr{H}}|\hat{\mathrm{er}}_n(h)-\mathrm{er}_\uppi(h)|>\varepsilon\right)&\le\bigl\lVert\frac{\D\upmu}{\D\uppi}\bigr\rVert_p\Prob_\uppi\left(\sup_{h\in\mathscr{H}}|\hat{\mathrm{er}}_n(h)-\mathrm{er}_\uppi(h)|>\varepsilon\right)^{1/q},\end{align*} where again $q\in[1,\infty)$ is such that $1/p+1/q=1$.
From this it follows that  for $\varepsilon>0$,  $\delta\in(0,1)$  and an $\varepsilon$-ASEM algorithm  $\mathcal{A}^\varepsilon$  for $\mathscr{H}$, 
we have that
$$\Prob_\upmu\left(|\mathrm{er}_\uppi\bigl(\mathcal{A}^\varepsilon(Z_0,\dots,Z_{n-1})\bigr)-\mathrm{opt}_\uppi(\mathscr{H})|<5\varepsilon\right)\ge 1-\delta$$ for all  $$n\ge \frac{2q(1+\lambda)}{(1-\lambda)}\log\left(\frac{\bigl\lVert\frac{\D\upmu}{\D\uppi}\bigr\rVert_p \mathcal{N}(\varepsilon/4\bar L,\mathscr{H},\uprho_\mathscr{H})^{1/q}}{\delta}\right).$$
Additionally, if $p=2$ from \cite[Proposition 1.5]{Kontoyiannis-Meyn-2012} we conclude
\begin{align*}
&\Prob_\upmu\left(|\mathrm{er}_\uppi\bigl(\mathcal{A}^\varepsilon(Z_n,\dots,Z_{2n-1})\bigr)-\mathrm{opt}_\uppi(\mathscr{H})|\ge5\varepsilon\right)\\
&\le \Prob_\uppi\left(|\mathrm{er}_\uppi\bigl(\mathcal{A}^\varepsilon(Z_0,\dots,Z_{n-1})\bigr)-\mathrm{opt}_\uppi(\mathscr{H})|\ge5\varepsilon\right)\\&\ \ \ +\bigl|\Prob_\upmu\left(|\mathrm{er}_\uppi\bigl(\mathcal{A}^\varepsilon(Z_n,\dots,Z_{2n-1})\bigr)-\mathrm{opt}_\uppi(\mathscr{H})|\ge5\varepsilon\right)\\&\hspace{1cm}-\Prob_\uppi\left(|\mathrm{er}_\uppi\bigl(\mathcal{A}^\varepsilon(Z_0,\dots,Z_{n-1})\bigr)-\mathrm{opt}_\upmu(\mathscr{H})|\ge5\varepsilon\right)\bigr|\\
&\le \delta+ \lVert\upmu\mathcal{P}^n(\D z)-\uppi(\D z)\rVert_\mathrm{TV}\\
&\le \delta+\frac{1}{2}\left(\int_{\mathsf{Z}}\left(1-\frac{\D\upmu}{\D\uppi}(z)\right)^2\uppi(\D z)\right)^{1/2}(1-\lambda)^n
\end{align*}
for all  $$n\ge \frac{2(1+\lambda)}{(1-\lambda)}\log\left(\frac{ \mathcal{N}(\varepsilon/4\bar L,\mathscr{H},\uprho_\mathscr{H})}{\delta}\right).$$


 
\section{Proof of the main results}
In this section, we prove \Cref{TM1.1,TM1.2}.

\subsection*{Proof of \Cref{TM1.1}} 
Fix $h\in\mathscr{H}$ and $z_0 \in \mathsf{Z}$. Then, (A3) and the boundedness of the metric $\rho$ on $\mathsf{Z}$, imply that 
$\lVert \ell^\uppi_h\rVert_\infty\le 2\lVert \ell_h\rVert_\infty \le2\sup_{z_1,z_2\in\sfZ}\uprho(z_1,z_2) + 2|\ell_h(z_0)| < \infty$, $\uppi(\ell^\uppi_h)=0$,
$\ell_h$ and $\ell^\uppi_h$ are Lipschitz continuous, and $\mathrm{Lip}(\ell^\uppi_h)=\mathrm{Lip}(\ell_h)$. 
Further, by employing Kantorovich-Rubinstein theorem, for any $z\in\sfZ$, from \cref{eq1.1} it follows that
$$\mathbb{E}_z\bigl[\ell_h^\uppi(Z_n)\bigr]\le \mathrm{Lip}(\ell^\uppi_h) 
\mathscr{W}\bigl(\mathcal{P}^n(z,\D\bar z),\uppi(\D \bar z)\bigr)\le C_1 \mathrm{Lip}(\ell_h) \E^{-C_2n} \le C_1 L \E^{-C_2n},$$
where in the last line we used assumption (A3), more precisely, we used that $|\ell_h(z_1) - \ell_h(z_2)| \le L \rho(z_1, z_2)$. Thus, the function \begin{equation}\label{eq:g}g(z)\df \sum_{n\ge0}\mathbb{E}_z\bigl[\ell_h^\uppi(Z_n)\bigr]\end{equation} is well defined. Furthermore, the function $g(z)$ satisfies
\begin{equation}\label{eq:norm_g_bound}
	\lVert g\rVert_\infty\le C_1L/(1-\E^{-C_2})
\end{equation}
and solves the equation
\begin{equation}\label{eq:g_solves}	g(z)-\mathbb{E}_z\bigl[g(Z_1)\bigr]=\ell_h^\uppi(z).
\end{equation}
Set now $W_i\df \ell_h(Z_i)$  and $S_n\df\sum_{i=0}^{n-1}W_i$ for $i\ge0$ and $n\ge1$. Thus, $$S_n-n\,\uppi(\ell_h)=n\,\hat{\mathrm{er}}_n(h)-n\,\mathrm{er}_\uppi(h).$$ Fix now arbitrary $\epsilon>0$. Analogously as in 
the proof of \cite[Theorem 2]{Glynn-Ormoneit-2002} we see that 
\begin{align*}\Prob_z\bigl(|\hat{\mathrm{er}}_n(h)-\mathrm{er}_\uppi(h)|>\epsilon\bigr)&=\Prob_z\bigl(|S_n-n\,\uppi(\ell_h)|>n\epsilon\bigr)\\&\le\E^{-(\epsilon n/\lVert g\rVert_\infty-2)^2/(2n)}\\&\le \E^{-\bigl(\frac{\epsilon n(1-\E^{-C_2})}{2C_1L}-2\bigr)^2/(2n)},\qquad \forall n\ge \frac{4C_1L}{\epsilon(1-\E^{-C_2})}.\end{align*}

Further, let $\mathscr{C}_\epsilon=\{h_1,\dots, h_{\mathcal{N}(\epsilon,\mathscr{H},\mathsf{d}_\mathscr{H})}\}$ be the $\epsilon$-cover of $\mathscr{H}$. Thus, for any $h\in\mathscr{H}$ there is $h_i\in \mathscr{C}_\epsilon$ such that $\mathsf{d}_\mathscr{H}(h,h_i)<\epsilon$. 
We now have,
\begin{align*}
\Prob_z\left(\sup_{h\in\mathscr{H}}|\hat{\mathrm{er}}_n(h)-\mathrm{er}_\uppi(h)|>4\bar L\epsilon\right)&\le\Prob_z\left(\bigcup_{i=1}^{\mathcal{N}(\epsilon,\mathscr{H},\mathsf{d}_\mathscr{H})}\left\{\sup_{h\in\{\bar h: \mathsf{d}_\mathscr{H}(\bar h,h_i)<\epsilon\}}|\hat{\mathrm{er}}_n(h)-\mathrm{er}_\uppi(h)|>4\bar L\epsilon\right\}\right)\\
&\le\sum_{i=1}^{\mathcal{N}(\epsilon,\mathscr{H},\mathsf{d}_\mathscr{H})}\Prob_z\left(\sup_{h\in\{\bar h: \mathsf{d}_\mathscr{H}(\bar h,h_i)<\epsilon\}}|\hat{\mathrm{er}}_n(h)-\mathrm{er}_\uppi(h)|>4\bar L\epsilon\right).
\end{align*}
For $h\in\{\bar h: \mathsf{d}_\mathscr{H}(\bar h,h_i)<\epsilon\}$, from (A2), it follows that $$|\hat{\mathrm{er}}_n(h)-\mathrm{er}_\uppi(h)-\hat{\mathrm{er}}_n(h_i)+\mathrm{er}_\uppi(h_i)|\le 2\bar L\uprho_\mathscr{H}(h,h_i)<2\bar L\epsilon.$$ Thus,
\begin{align*}\Prob_z\left(\sup_{h\in\{\bar h: \mathsf{d}_\mathscr{H}(\bar h,h_i)<\epsilon\}}|\hat{\mathrm{er}}_n(h)-\mathrm{er}_\uppi(h)|>4\bar L\epsilon\right)&\le \Prob_z\left(|\hat{\mathrm{er}}_n(h_i)-\mathrm{er}_\uppi(h_i)|>2\bar L\epsilon\right)\\
&\le\E^{-\bigl(\frac{\epsilon\bar L n(1-\E^{-C_2})}{C_1L}-2\bigr)^2/(2n)},\qquad \forall n\ge \frac{2C_1L}{\bar L\epsilon(1-\E^{-C_2})},
\end{align*}
which implies \begin{equation}\label{eq:unif}\Prob_z\left(\sup_{h\in\mathscr{H}}|\hat{\mathrm{er}}_n(h)-\mathrm{er}_\uppi(h)|>4\bar L\epsilon\right) \le \mathcal{N}(\epsilon,\mathscr{H},\mathsf{d}_\mathscr{H})\E^{-\bigl(\frac{\epsilon\bar L n(1-\E^{-C_2})}{C_1L}-2\bigr)^2/(2n)}\\
\end{equation} for all $n\ge 2C_1L/\bar L\epsilon(1-\E^{-C_2})$.

Finally, set $\varepsilon\df 4\bar L\epsilon$,   let $\mathcal{A}^\varepsilon$ be the $\varepsilon$-ASEM algorithm for $\mathscr{H}$, and  fix   $\delta\in(0,1)$. Then, 
$$\Prob_z\left(\sup_{h\in\mathscr{H}}|\hat{\mathrm{er}}_n(h)-\mathrm{er}_\uppi(h)|\le\varepsilon\right)\ge 1-\delta\qquad \forall n\ge n_1(\varepsilon,\delta).$$
Consequently,  for all $n\ge n_0(\varepsilon,\delta)$ it follows that with probability at least $1-\delta$,
 \begin{align*}\mathrm{er}_\uppi\bigl(\mathcal{A}^\varepsilon(Z_0,\dots,Z_{n-1})\bigr)&\le\mathrm{er}_\uppi(h_{(Z_0,\dots,Z_{n-1})})+\varepsilon/4\\
 &\le \hat{\mathrm{er}}_n(h_{(Z_0,\dots,Z_{n-1})})+5\varepsilon/4\\
 &\le\hat{\mathrm{er}}_n\bigl(\mathcal{A}^\varepsilon(Z_0,\dots,Z_{n-1})\bigr)+6\varepsilon/4\\
 &\le\inf_{h\in\mathscr{H}}\hat{\mathrm{er}}_n(h)+10\varepsilon/4\\
 &\le \hat{\mathrm{er}}_n(\bar h)+5\varepsilon/2\\
 &\le \hat{\mathrm{er}}_n(h_i)+11\varepsilon/4\\
 &\le \mathrm{er}_\uppi(h_i)+15\varepsilon/4\\
 &\le \mathrm{er}_\uppi(\bar h)+4\varepsilon\\
 &< \mathrm{opt}_\uppi(\mathscr{H})+5\varepsilon,\end{align*} where  $h_{(Z_0,\dots,Z_{n-1})}\in \mathscr{C}_{\varepsilon/4\bar L}$, $\bar h\in \mathscr{H}$ is such that $\mathrm{er}_\uppi(\bar h)<\mathrm{opt}_\uppi(\mathscr{H})+\varepsilon$, and  $h_i\in \mathscr{C}_{\varepsilon/4\bar L}$ such that $\mathsf{d}_\mathscr{H}(\bar h,h_i)<\varepsilon/4\bar L$.  \qed

 
\subsection*{Proof of \Cref{TM1.2}} 
Define
\begin{equation*}
	\tilde{\varepsilon} \df \sqrt{\frac{\varepsilon}{1 + \frac{1}{\alpha}}},
\end{equation*}
where $\varepsilon > 0$ is the one from the statement of this result. Using
\begin{equation*}
	n \ge  \frac{8C_1 L\sqrt{1+1/\alpha}}{\sqrt{m\varepsilon} (1 - \E^{-C_2})}=\frac{8C_1 L}{\sqrt{m}\tilde{\varepsilon} (1 - \E^{-C_2})},
\end{equation*}
together with \cref{eq:mM,eq:unif} we get
\begin{align*}
	\mathbb{P}_z \left( \sup_{h \in \mathscr{H}} \frac{\left| \hat{\rm er}_n(h) - \mathrm{er}_{\uppi}(h) \right|}{\sqrt{\mathrm{er}_{\uppi}(h)}} \ge \tilde{\varepsilon} \right)& \le \mathbb{P}_z \left( \sup_{h \in \mathscr{H}} \left| \hat{\rm er}_n(h) - \mathrm{er}_{\uppi}(h) \right| \ge \sqrt{m}\tilde{\varepsilon} \right)\\&\le  \mathcal{N} \left( \sqrt{m}\tilde{\varepsilon}/(4\bar{L}), \mathscr{H}, \mathsf{d}_{\mathscr{H}} \right) \E^{-\left(\frac{\sqrt{m}\tilde{\varepsilon}n(1-\E^{-C_2})}{4C_1L}-2\right)^2/(2n)}.
\end{align*}
Hence, we obviously have
\begin{equation}\begin{aligned}\label{eq:ineq_with_tilde}
&	\mathbb{P}_z \left(\exists\, h \in \mathscr{H} :  \frac{\mathrm{er}_{\uppi}(h) - \hat{\rm er}_n(h)}{\sqrt{\mathrm{er}_{\uppi}(h)}} \ge \tilde{\varepsilon} \right)\\  &\le  \mathcal{N} \left( \sqrt{m}\tilde{\varepsilon}/(4\bar{L}), \mathscr{H}, \mathsf{d}_{\mathscr{H}} \right) \E^{-\left(\frac{\sqrt{m}\tilde{\varepsilon}n(1-\E^{-C_2})}{4C_1L}-2\right)^2/(2n)}.
\end{aligned}\end{equation}
We now follow the proof of \cite[Theorem 5.8]{Anthony-Bartlett-Book-1999}. Suppose that
\begin{equation}\label{eq:key_eq_in_cor}
	\mathrm{er}_{\uppi}(h) - \hat{\rm er}_n(h) \le \tilde{\varepsilon} \sqrt{\mathrm{er}_{\uppi}(h)}.
\end{equation}
Then, for $\alpha > 0$ from the statement of this result (arbitrary, but fixed) we have two cases.

\medskip
\noindent
\textit{Case 1}: If $\mathrm{er}_{\uppi}(h) < (1 + 1/\alpha)^2 \tilde{\varepsilon}^2$ then \cref{eq:key_eq_in_cor} implies
\begin{equation*}
	\mathrm{er}_{\uppi}(h) \le \hat{\rm er}_n(h) + \tilde{\varepsilon}^2 \left( 1 + \frac{1}{\alpha} \right).
\end{equation*}

\medskip
\noindent
\textit{Case 2}: If $\mathrm{er}_{\uppi}(h) \ge (1 + 1/\alpha)^2 \tilde{\varepsilon}^2$ then $\tilde{\varepsilon} \le \sqrt{\mathrm{er}_{\uppi}(h)} / (1 + 1 / \alpha)$. Hence, from \cref{eq:key_eq_in_cor} it follows that
\begin{equation*}
	\mathrm{er}_{\uppi}(h) \le \hat{\rm er}_n(h) + \frac{\alpha}{\alpha + 1} \cdot \mathrm{er}_{\uppi}(h),
\end{equation*}
which is equivalent to
\begin{equation*}
	\mathrm{er}_{\uppi}(h) \le (1 + \alpha) \hat{\rm er}_n(h).
\end{equation*}
Therefore, we have just showed that \cref{eq:key_eq_in_cor} implies
\begin{equation*}
	\mathrm{er}_{\uppi}(h) \le (1 + \alpha) \hat{\rm er}_n(h) + \tilde{\varepsilon}^2 \left( 1 + \frac{1}{\alpha} \right) = (1 + \alpha) \hat{\rm er}_n(h) + \varepsilon.
\end{equation*}
Combining this with \cref{eq:ineq_with_tilde}, we get
\begin{align*}
	&\mathbb{P}_z
	 \left( \exists\, h \in \mathscr{H} : \mathrm{er}_{\uppi}(h) > (1 + \alpha) \hat{\rm er}_n(h) + \varepsilon  \right) \le \mathbb{P}_z \left( \exists\, h \in \mathscr{H} : \frac{\mathrm{er}_{\uppi}(h) - \hat{\rm er}_n(h)}{\sqrt{\mathrm{er}_{\uppi}(h)}} > \tilde{\varepsilon} \right) \\
	&  \le \mathcal{N} \left( \sqrt{m}\tilde{\varepsilon}/(4\bar{L}), \mathscr{H}, \mathsf{d}_{\mathscr{H}} \right) \E^{-\left(\frac{\sqrt{m}\tilde{\varepsilon}n(1-\E^{-C_2})}{4C_1L}-2\right)^2/(2n)} \\
	&  = \mathcal{N}\left( \frac{\sqrt{m\varepsilon}}{4\bar{L} \sqrt{1 + 1/\alpha}}, \mathscr{H}, \mathsf{d}_{\mathscr{H}} \right) \E^{-\left(\frac{\sqrt{m\varepsilon}n(1-\E^{-C_2})}{4C_1L\sqrt{1 + 1/\alpha}}-2\right)^2/(2n)},
\end{align*}
and this is clearly less than or equal to $\delta$ for
\begin{equation*}
	n \ge \frac{32C_1^2L^2(1+1/\alpha)}{m\varepsilon(1-\E^{-C_2})^2} \left( \frac{\sqrt{m\varepsilon}(1-\E^{-C_2})}{C_1 L\sqrt{1+1/\alpha}} + \ln \left( \frac{\mathcal{N}\left( \frac{\sqrt{m\varepsilon}}{4\bar{L} \sqrt{1 + 1/\alpha}}, \mathscr{H}, \mathsf{d}_{\mathscr{H}} \right)}{\delta} \right) \right),
\end{equation*}
which is exactly what we wanted to prove.
\qed

\section{Appendix}
 In this section, we prove the auxiliary result used in the construction of the example satisfying condition (A4).

\begin{lemma}\label{LM}
	 Let $\chain{X}$ be a Markov chain on $\mathsf{X}$ with transition kernel $\mathcal{P}_X(x,\D \bar x)$, and let 
	$f:\mathsf{X}\to f(\mathsf{X})\subseteq\mathsf{Y}$  be  measurable.
	The family of random variables $\{f(X_n)\}_{n\ge0}$ is a Markov chain on $\mathsf{Y}$ if 
	$\{f^{-1}(\{y\})\}_{y\in f(\mathsf{X})}$ forms a family of atoms for $\chain{X}$ on $f^{-1}(f(\mathsf{X}))$, i.e. for any $y\in f(\mathsf{X})$ and $x_1,x_2\in f^{-1}(\{y\})$, $$\mathcal{P}_X(x_1,f^{-1}(B))=\mathcal{P}_X(x_2,f^{-1}(B)).$$ In that case, transition kernel of  $\{f(X_n)\}_{n\ge0}$ is given by $$\mathcal{P}(y,B)=\mathcal{P}_X(x,f^{-1}(B))$$ for  $y\in f(\mathsf{X})$ and  $x\in f^{-1}(\{y\})$.
	\end{lemma}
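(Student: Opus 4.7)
The plan is to derive the Markov property of $\{f(X_n)\}_{n\ge 0}$ from the Markov property of $\{X_n\}_{n\ge 0}$ by first conditioning on the finer history $\mathcal{F}_n^X=\sigma(X_0,\dots,X_n)$ and then projecting onto $\mathcal{F}_n^f=\sigma(f(X_0),\dots,f(X_n))$ via the tower property.

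First, I would check that the candidate kernel $\mathcal{P}(y,B)\df \mathcal{P}_X(x,f^{-1}(B))$ for any $x\in f^{-1}(\{y\})$ is well-defined and is indeed a transition kernel on $\mathsf{Y}$. Well-definedness is exactly the atoms hypothesis. For each $y$, the set function $B\mapsto \mathcal{P}(y,B)$ is a probability measure on $\mathsf{Y}$, being the push-forward of $\mathcal{P}_X(x,\cdot)$ by $f$. Measurability of $y\mapsto \mathcal{P}(y,B)$ for fixed $B$ follows from a Doob–Dynkin factorization: the map $x\mapsto \mathcal{P}_X(x,f^{-1}(B))$ is $\mathcal{X}$-measurable and, by the atoms condition, is constant on the fibers of $f$, hence factors as $\mathcal{P}(\cdot,B)\circ f$ with $\mathcal{P}(\cdot,B)$ measurable on $f(\mathsf{X})$.

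Second, I would verify the Markov property for $\{f(X_n)\}_{n\ge 0}$. For any bounded measurable $g\colon\mathsf{Y}\to\bbR$, the Markov property of $\{X_n\}_{n\ge 0}$ gives
\begin{equation*}
\mathbb{E}\bigl[g(f(X_{n+1}))\mid\mathcal{F}_n^X\bigr]=\int_{\mathsf{X}} g(f(\bar x))\,\mathcal{P}_X(X_n,\D\bar x).
\end{equation*}
Taking $g=\mathbbm{1}_B$ and using the definition of $\mathcal{P}$, the right-hand side equals $\mathcal{P}(f(X_n),B)$; extending to general bounded measurable $g$ by the standard monotone class argument, it equals $\int_{\mathsf{Y}} g(\bar y)\,\mathcal{P}(f(X_n),\D\bar y)$. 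Since $\mathcal{F}_n^f\subseteq\mathcal{F}_n^X$ and $f(X_n)$ is $\mathcal{F}_n^f$-measurable, the tower property yields
\begin{equation*}
\mathbb{E}\bigl[g(f(X_{n+1}))\mid\mathcal{F}_n^f\bigr]=\int_{\mathsf{Y}} g(\bar y)\,\mathcal{P}(f(X_n),\D\bar y),
\end{equation*}
which is precisely the Markov property for $\{f(X_n)\}_{n\ge 0}$ with transition kernel $\mathcal{P}$, and identifies this kernel as the one claimed in the statement.

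The only delicate step is the measurability of $y\mapsto \mathcal{P}(y,B)$; the atoms hypothesis alone yields constancy on fibers, and one needs the Doob–Dynkin type argument (or a measurable section of $f\colon f^{-1}(f(\mathsf{X}))\to f(\mathsf{X})$) to promote this to measurability on $f(\mathsf{X})$. Once this is handled, everything else is a routine application of the Markov property of $\{X_n\}_{n\ge 0}$ together with the tower property.
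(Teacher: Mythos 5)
Your argument is correct and, modulo the measurability point you yourself flag, complete; it is, however, organized differently from the paper's. You stay under a fixed law $\Prob_x$, condition $g(f(X_{n+1}))$ on the finer filtration $\sigma(X_0,\dots,X_n)$, use the atom hypothesis to rewrite the resulting expression as a function of $f(X_n)$ only, and then project down to $\sigma(f(X_0),\dots,f(X_n))$ by the tower property --- an explicit verification of the Markov property of the image process together with the identification of its kernel. The paper instead works directly in the Meyn--Tweedie formalism, in which a Markov chain is a family of laws indexed by starting points: it defines $\bar{\Prob}_y(f(X_n)\in B)\df\Prob_x(X_n\in f^{-1}(B))$ for any $x\in f^{-1}(\{y\})$, notes that the atom condition makes this well defined, and declares the assertion immediate. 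Both routes turn on exactly the same key fact (well-definedness of the pushforward kernel on the fibers of $f$); yours makes the use of the Markov property and the tower property explicit, which is more informative, while the paper's is a construction-plus-assertion. The one delicate issue you identify --- measurability of $y\mapsto\mathcal{P}(y,B)$, which does not follow from fiber-constancy alone without a measurable section of $f$ or taking the final $\sigma$-algebra on $f(\mathsf{X})$ --- is not addressed in the paper's proof either; in the paper's application it is harmless, since there the map is $x\mapsto(x,f(x))$, which is injective with continuous inverse, so no gap arises relative to the paper's own standard of rigor.
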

\begin{proof} By assumption, $\chain{X}$ is defined on  $(\Omega,\mathcal{F},\Prob_x)_{x\in\mathsf{X}}$. Now, on  $(\Omega,\mathcal{F})$ define a family of probability measures $\{\bar{\mathbb{P}}_y\}_{y\in f(\mathsf{X})}$ by $\bar{\mathbb{P}}_y(f(X_n)\in B)\df\mathbb{P}_x(X_n\in f^{-1}(B))$ for $n\ge0$, $y\in f(\mathsf{X})$ and $x\in f^{-1}(\{y\})$. By assumption, $\{\bar{\mathbb{P}}_y\}_{y\in f(\mathsf{X})}$ is well defined, and the assertion now follows directly.
\end{proof}

\section*{Acknowledgements}
 Financial support through  \textit{Alexander von Humboldt Foundation} (No. HRV 1151902 HFST-E)  and \textit{Croatian Science Foundation} under project 8958 (for N.\ Sandri\'c), and the \textit{Croatian Science Foundation} under project 4197 (for S.\ \v Sebek) is gratefully acknowledged. We also thank the anonymous referee for helpful comments that have led to improvements of the presentation of the article.


\bibliographystyle{alpha}
\bibliography{References}

\end{document}